\definecolor{xfqqff}{rgb}{0.4980392156862745,0.,1.}
\definecolor{zzffff}{rgb}{0.6,1.,1.}
\definecolor{ffzzcc}{rgb}{1.,0.6,0.8}
\definecolor{ffdxqq}{rgb}{1.,0.8431372549019608,0.}
\definecolor{qqwuqq}{rgb}{0.,0.39215686274509803,0.}
\definecolor{sqsqsq}{rgb}{0.12549019607843137,0.12549019607843137,0.12549019607843137}
\definecolor{yqyqyq}{rgb}{0.5019607843137255,0.5019607843137255,0.5019607843137255}
\definecolor{qqffqq}{rgb}{0.,1.,0.}
\definecolor{ffqqqq}{rgb}{1.,0.,0.}
\definecolor{ttttff}{rgb}{0.2,0.2,1.}
\newcommand{\cut}[1]{}
\newtheorem{thm}{Theorem}[section]
\newtheorem{theorem}[thm]{Theorem}
\newtheorem{proposition}[thm]{Proposition}
\newtheorem{lemma}[thm]{Lemma}
\newtheorem{remark}[thm]{Remark}
\newtheorem{definition}[thm]{Definition}
\newtheorem{result}[thm]{Result}
\newcommand{\cC}{\mathcal{C}}
\newcommand{\cH}{\mathcal{H}}
\newcommand{\PG}{\mathrm{PG}}
\newcommand{\AG}{\mathrm{AG}}
\newcommand{\GF}{\mathrm{GF}}
\newcommand{\UCN}{\bar\chi}
\newcommand{\qbinom}[2]%
           {\left[\!\begin{smallmatrix}#1\\#2\end{smallmatrix}\!\right]_q}
\newcommand{\PP}{{\mathbb{P}}}
\newcommand{\ob}{\overline{\chi}_b}
\newcolumntype{M}[1]{>{\centering\arraybackslash}m{#1}}
\begin{document}

\title {On the balanced upper chromatic number of finite
        projective planes}
 \vspace{3mm}
\author{\begin{tabular}{c}
Zolt\'an L. Bl\'azsik\footnote{MTA--ELTE Geometric and Algebraic Combinatorics Research Group, 1117 Budapest, P\'azm\'any P.\ stny.\ 1/C, Hungary} \qquad\qquad Aart Blokhuis\footnote{Eindhoven University of Technology, Eindhoven, The Netherlands} \qquad\qquad \v{S}tefko Miklavi\v{c}\footnote{Andrej Maru\v{s}i\v{c} Institute, University of Primorska, Koper, Slovenia} \\ ~\\
Zolt\'an L\'or\'ant Nagy\footnotemark[1] \qquad\qquad Tam\'as Sz\H{o}nyi\footnotemark[1]\textsuperscript{~,}\footnote{ELTE E\"otv\"os Lor\'and University, Budapest, Hungary}
\end{tabular}}

\maketitle

\cut{
\begin{abstract}
In this paper, we determine the
balanced upper chromatic number of the desarguesian
projective plane $\PG(2,q)$ for all $q$.
Moreover, we give colorings of projective planes represented by cyclic,
affine and relative difference sets (or planar functions) giving
a lower bound on the balanced upper chromatic number of the right
order of magnitude.
Furthermore, if $q\not\equiv 0 \pmod 3$ then we show a balanced coloring
with 3 and 4 element color classes of an affine plane of order $q$ represented
by a planar function.
We finally give a general lower bound using a probabilistic argument.
\end{abstract}
}

\begin{abstract}
In this paper, we study vertex colorings of hypergraphs in which all color class sizes differ by at most  one (balanced colorings) and each hyperedge contains at least two vertices of the same color (rainbow-free colorings). For any  hypergraph $H$, the maximum number $k$ for which  there is a balanced rainbow-free $k$-coloring of $H$ is called the balanced upper chromatic number of the hypergraph. We confirm the conjecture of Araujo-Pardo, Kiss and Montejano by determining the balanced upper chromatic number of the desarguesian projective plane $\PG(2,q)$ for all $q$. In addition, we determine asymptotically the balanced upper chromatic number of several families of non-desarguesian projective planes and also provide a general lower bound for arbitrary projective planes using probabilistic methods which determines the parameter up to a multiplicative constant.
\end{abstract}

\section{Introduction}

In recent years the notion of a proper strict coloring of hypergraphs
was investigated in several papers by Voloshin, Bacs\'o, Tuza and others,
including \cite{Iljics}, \cite{V2}, \cite{TB} and \cite{BHSz}. In this work,
instead of studying the upper chromatic number we will focus on improving the
known estimates of the balanced upper chromatic number of such hypergraphs
which arise from projective planes.

Let $\cH$ denote a hypergraph with vertex set $V$ ($|V|=v$) and
(hyper)edge set $E$.
A strict $N$-coloring $\cC$ of $\cH$ is a coloring of the vertices using
exactly $N$ colors; in other words, the collection $\cC=\{C_1,\ldots,C_N\}$ of color classes is a partition
of $V$. Given a coloring $\cC$, we
define the mapping $\varphi_\cC\colon V\to \{1,2,\ldots,N\}$ by
$\varphi_\cC(P)=i$ if and only if $P\in C_i$.
We call the numbers $1,\ldots, N$
colors and the sets $C_1,\ldots, C_N$ color classes. We call an edge
$H\in E$ \emph{rainbow (with respect to $\cC$)} if no two points of $H$ have
the same color; that is, $|H\cap C_i|\leq 1$ for all $1\leq i\leq N$.
The upper chromatic number of the hypergraph $\cH$, denoted by $\UCN(\cH)$,
is the maximal number $N$ for which $\cH$ admits a strict $N$-coloring
without rainbow edges.
Let us call such a coloring \emph{proper} or \emph{rainbow-free}.
A \emph{balanced coloring} is a coloring in which the
cardinality of any two color classes differs by at most one.
The \emph{balanced upper chromatic number}
of a hypergraph $\cH$, denoted by $\ob(\cH)$, is the largest integer $N$ such
that $\cH$ admits a proper strict balanced $N$-coloring.

In the following sections we will focus on hypergraphs
which arise from a projective plane
$\Pi$ (of order $q$).
The vertices are the points of the plane and the edges correspond
to the lines of the plane.
In 2015, Araujo-Pardo, Kiss and Montejano proved the following results.

\begin{result}[\cite{KissGy}] \label{AKMupper}
All balanced rainbow-free colorings of any projective plane of order $q$
satisfy that each color class contains at least three points. Thus
\[
    \ob(\Pi_q) \le \frac{q^2+q+1}{3}.
\]
\end{result}

\begin{result}[\cite{KissGy}] \label{AKMlower}
For every cyclic projective plane $\Pi_q$ we have
\[
    \ob(\Pi_q) \ge \frac{q^2+q+1}{6}.
\]
If the difference set defining $\Pi_q$ in  $\mathbb{Z}_{q^2+q+1}$
contains $\{0,1,3\}$ then
 \[
    \ob(\Pi_q) = \left \lfloor \frac{q^2+q+1}{3} \right \rfloor.
\]
\end{result}

We will use the last observation to determine the balanced upper
chromatic number of the desarguesian projective plane $\PG(2,q)$ in the second section.

In the third section, we will use some well-known representations (such as
affine and relative difference sets, planar functions) of projective planes of order $q$ (including non-desarguesians) in order to present general lower bounds on the balanced upper chromatic number. We managed to reach the correct order of magnitude for the remaining two cases, too. Moreover, we prove a sharp result if $q\equiv 0 \pmod{3}$.

\begin{theorem}\label{aff}
For $q\equiv 2 \pmod{3}$, let $\Pi_q$ be a projective plane of order $q$
represented by an affine difference set. Then
\[
    \ob(\Pi_q) \ge \frac{q^2+2}{3}.
\]
\end{theorem}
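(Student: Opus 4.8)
The plan is to exploit the quasiregular collineation group that underlies the affine difference set representation. Writing $q=p^h$, we may present $\Pi_q$ on the $\GF(q)$-vector space $\GF(q^2)\oplus\GF(q)$, so that the multiplicative group $G=\GF(q^2)^*$ of order $q^2-1$ acts as a collineation group fixing the point $P_\infty=\langle(0,1)\rangle$ and the line $\ell_\infty=\PG(\GF(q^2))$. I would first record the resulting orbit structure: $G$ acts regularly on the set $\mathcal{O}$ of $q^2-1$ affine points $\langle(u,1)\rangle$ ($u\in\GF(q^2)^*$), which I identify with $G$ itself; it acts on $\ell_\infty$ as a single orbit of $q+1$ points, the stabiliser of each being the subgroup $N=\GF(q)^*$ of order $q-1$; and it fixes $P_\infty$. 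The $q+1$ lines through $P_\infty$ (the \emph{vertical} lines) are exactly the sets $u_0N\cup\{P_\infty,[u_0N]\}$, where $[u_0N]\in\ell_\infty$ is the point attached to the coset $u_0N$; the remaining $q^2-1$ lines are the translates $\lambda R$ of a base block $R$ with $|R|=q$, and the affine difference set property is the statement that the ratio multiset $\{r_1 r_2^{-1}:r_1\ne r_2\in R\}$ covers every element of $G\setminus N$ exactly once and no nonidentity element of $N$.

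Now I use $q\equiv 2\pmod 3$. This gives $3\mid q+1\mid|G|$, so $G$ contains a subgroup $T=\{1,\omega,\omega^2\}$ of order $3$, while $3\nmid q-1=|N|$ forces $T\cap N=\{1\}$. The core of the construction is to colour $\mathcal{O}\cong G$ by the cosets of $T$, i.e.\ the classes $uT=\{u,u\omega,u\omega^2\}$, giving $(q^2-1)/3$ classes of size three. Since $\omega\in G\setminus N$, the difference set property yields a unique pair $r_1,r_2\in R$ with $r_1 r_2^{-1}=\omega$, and this pair lies in one $T$-coset; translating, every affine line $\lambda R$ contains the pair $\lambda r_1,\lambda r_2$ of equally coloured points. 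Thus all $q^2-1$ affine lines are automatically rainbow-free, which is the key gain from the group action. Conversely, because $T\cap N=\{1\}$, each vertical coset $u_0N$ meets every $T$-coset at most once, so the affine points of a vertical line are pairwise differently coloured and the vertical and infinite lines must be handled separately.

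It remains to colour the $q+2$ special points ($P_\infty$ together with the $q+1$ points of $\ell_\infty$) so as to reach exactly $(q^2+2)/3$ classes while making the vertical lines and $\ell_\infty$ rainbow-free. I would put $P_\infty$ and two chosen points $X_1,X_2\in\ell_\infty$ into one extra class of size three; this makes $\ell_\infty$ rainbow-free and simultaneously kills the two vertical lines through $X_1$ and $X_2$, since on each of these $P_\infty$ now repeats the colour of its infinite point. For each of the remaining $q-1$ points $Y=[u_0N]$ of $\ell_\infty$ I would enlarge the $T$-coset of a suitably chosen affine point $p_Y\in u_0N$ to the size-four class $p_Y T\cup\{Y\}$; then $Y$ and $p_Y$ lie together on the vertical line through $Y$, making it rainbow-free. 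Adding up, the colouring has the $(q^2-1)/3$ orbit classes (of which $q-1$ have been enlarged to size four) plus the single special class, that is $(q^2-1)/3+1=(q^2+2)/3$ classes, all of size three or four, which is exactly the bound claimed.

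The one genuine thing to verify is that the representatives $p_Y$ can be chosen in pairwise distinct $T$-cosets, so that precisely $q-1$ distinct classes are enlarged. This is a small system-of-distinct-representatives problem: the $q+1$ cosets of $N$ fall into $(q+1)/3$ cosets of $TN$, and within each $TN$-coset the three relevant cosets of $N$ and the $q-1$ relevant cosets of $T$ interlace so that each $N$-coset still meets all $q-1$ of the $T$-cosets present; since $q-1\ge 3$ (the plane of order $2$ being immediate), Hall's condition holds trivially and a valid choice exists. I expect this matching step, together with pinning down the orbit and incidence dictionary in the first paragraph, to be the only points needing care; the heart of the argument---that the affine difference set property forces every affine line to be non-rainbow under the $T$-coset colouring---is immediate.
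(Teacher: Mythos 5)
Your proposal is correct and takes essentially the same route as the paper's own proof: colour the regular $G$-orbit by the cosets of an order-$3$ subgroup $T$ (which exists since $3\mid q^2-1$ while $3\nmid |N|=q-1$), use the affine difference set property to force two equally coloured points on every line missing the special point, and then absorb the special point and the $q+1$ points of the fixed line via one new class plus distinct enlargements of existing classes, arriving at $(q^2-1)/3+1=(q^2+2)/3$ classes of sizes $3$ and $4$ (the paper's only cosmetic difference is a size-four special class consisting of the origin and three ideal points, with $q-2$ rather than $q-1$ enlarged classes). One caveat: your opening identification of $\Pi_q$ with the field model $G=\GF(q^2)^*$ presumes the plane is desarguesian, which the hypothesis does not grant; this is harmless because the rest of your argument uses only the abstract dictionary (a group of order $q^2-1$, a normal subgroup $N$ of order $q-1$, the quotient-covering property), exactly as the paper does, except that for a possibly non-abelian $G$ you should colour by right cosets $Tu$ instead of $uT$, so that $d_1d_2^{-1}=\omega$ directly places $d_1,d_2$ in a common class.
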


\begin{theorem}\label{par}
For $q\equiv 0 \pmod{3}$, let $\Pi_q$ be a projective plane of order $q$
represented by a planar function (or relative difference set). Then
\[
    \ob(\Pi_q) = \left \lfloor \frac{q^2+q+1}{3} \right \rfloor =
\frac{q^2+q}{3}.
\]
\end{theorem}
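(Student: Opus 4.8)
The plan is to split into the trivial upper bound and the substantial lower-bound construction. By Result~\ref{AKMupper} we have $\ob(\Pi_q)\le (q^2+q+1)/3$, and since $q\equiv 0\pmod 3$ gives $q^2+q+1\equiv 1\pmod 3$, the integer $\ob(\Pi_q)$ is at most $\lfloor (q^2+q+1)/3\rfloor=(q^2+q)/3$. As $q^2+q+1=3\cdot\frac{q^2+q}{3}+1$, any coloring attaining this value consists of $\frac{q^2+q}{3}-1$ classes of size three and exactly one class of size four, so the whole problem reduces to producing one such balanced rainbow-free coloring.

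For the construction I would work in the relative-difference-set model. Since $q\equiv 0\pmod 3$ forces $q=3^h$, the affine points form the elementary abelian $3$-group $V=\GF(q)\times\GF(q)$; the planar function $f$ provides the $(q,q,q,1)$-relative difference set $R=\{(x,f(x)):x\in\GF(q)\}$ relative to $N=\{0\}\times\GF(q)$, and the lines of $\Pi_q$ are the $q^2$ translates $R+g$ (forming $q$ parallel classes indexed by $V/N$), the $q$ cosets $N+g$ (one further parallel class), and the line $\ell_\infty$ through the $q+1$ points at infinity. The key device is the order-three collineation $\sigma\colon P\mapsto P+u$ for a fixed $u\in V\setminus N$, extended to infinity; it permutes the translate-lines and the cosets of $N$, fixes $\ell_\infty$ setwise, and has exactly one fixed point, the point $\infty_N$ carried by the vertical parallel class. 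I would take the color classes to be the $\sigma$-orbits, of which there are $q^2/3$ on the affine points and $q/3$ among the remaining points at infinity, together with the singleton $\{\infty_N\}$; merging $\infty_N$ into any one orbit yields a balanced partition into $(q^2+q)/3$ classes, all of size three except one of size four.

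Why this is almost enough: if $\ell$ is any line with $\sigma\ell\ne\ell$, then $\ell\cap\sigma\ell$ is a single point $Q$, and unless $Q$ is the fixed point we get $\sigma^{-1}Q,Q\in\ell$ lying in a common orbit, i.e.\ a repeated color; and every $\sigma$-invariant line (in particular $\ell_\infty$) carries a size-three orbit because $\sigma$ fixes only one point. Since $R+g$ and its image $R+g+u$ lie in different parallel classes (as $u\notin N$), they meet in an affine point, necessarily off the fixed point, so all $q^2$ translate-lines and $\ell_\infty$ become rainbow-free. The obstacle is precisely the pencil through the fixed point: the $q$ vertical lines $N+g$ are the non-invariant lines through $\infty_N$, they meet their $\sigma$-images only at $\infty_N$, and two affine points of a common vertical differ by an element of $N$, hence never share a $\sigma$-orbit. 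Thus the orbit coloring leaves exactly the $q$ verticals rainbow, and the budget is painfully tight: with $(q^2+q)/3$ classes the total pairwise-covering capacity is $q^2+q+3$ against $q^2+q+1$ lines, a slack of only two.

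The remaining and genuinely hard step is to patch the $q$ verticals without creating any new rainbow line and without spoiling the balance. The natural complementary tool is a second order-three direction $w\in N\setminus\{0\}$: the cosets of $\langle w\rangle$ lie inside single verticals, so re-slicing a region of $V$ by $\langle w\rangle$ instead of $\langle u\rangle$ covers verticals, but it destroys the unique $\pm u$-pair that made the affected translate-lines rainbow-free. I therefore expect the correct construction to partition $V$ into cosets of the order-nine subgroup $\langle u,w\rangle$ and to choose, block by block, which of the two directions to cut along, arranging the choice so that every translate-line retains a monochromatic pair while all $q$ verticals acquire one; the characteristic-three arithmetic of $V$ and the relative-difference-set condition should make the counts close exactly within the permitted slack of two, and the lone size-four class can absorb both the point $\infty_N$ and the single parity discrepancy. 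Verifying that this block-by-block recoloring simultaneously keeps all $q^2$ translate-lines covered and covers all $q$ verticals is the crux of the proof.
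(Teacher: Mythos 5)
Your upper-bound argument and your first-stage construction are fine, and in fact the latter is essentially the paper's own starting point: with $u=(1,0)$ your $\langle u\rangle$-orbits are exactly the horizontal triples (the ``stripes'' sliced by $H=\{-1,0,1\}$) that the paper uses, and the paper makes the same observation that this coloring leaves precisely the $q$ vertical lines rainbow. But your proposal stops exactly where the real work begins: you explicitly leave the repair of the $q$ verticals as ``the crux,'' so the proof is incomplete. Worse, the patch you sketch cannot be made to work. Under the orbit coloring, each translate line $R+g$ meets exactly one color class in two points: the planar property gives a unique $x$ with $f(x+u_1)-f(x)=u_2$, and in characteristic $3$ the pair coming from $2u$ coincides with the pair coming from $u$ (if $x_1$ solves the first equation, then $x_1+u_1$ solves $f(x+2u_1)-f(x)=2u_2$ and yields the same two points), so every translate line has a \emph{unique} monochromatic pair; moreover it meets every coset of $\langle w\rangle$ with $w\in N$ in at most one point, since such cosets lie inside vertical lines. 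By translation-invariance, each coset of the order-nine subgroup $\langle u,w\rangle$ contains the unique pairs of exactly $9$ translate lines. Hence every block you re-slice in the $w$-direction irreparably turns $9$ translate lines rainbow -- no choice at the other blocks can re-cover them -- while a vertical line can acquire a monochromatic pair only from a $w$-sliced block. Since each block meets only $3$ verticals, you would need at least $q/3$ re-sliced blocks, ruining at least $3q$ translate lines: nowhere near your ``slack of two.''

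The idea your proposal is missing is that the rescue classes must mix affine and ideal points. The paper deletes, in each stripe $j\ge 1$, the two triples at heights $0$ and $1$ and replaces them by three classes of the form $\{(x_j-1,0),(x_j-1,1),P_{j1}\}$, $\{(x_j,0),(x_j,1),P_{j2}\}$, $\{(x_j+1,0),(x_j+1,1),P_{j3}\}$, where $P_{j1},P_{j2},P_{j3}$ are the ideal points of the translates through the corresponding pairs of bottom points. Each vertical line then contains a same-colored pair $\{(a,0),(a,1)\}$, and each translate line whose unique affine pair was destroyed is re-covered through its ideal point: for instance the translate through $(x_j-1,1),(x_j,1)$ is parallel to the one through $(x_j-1,0),(x_j,0)$ (both arise from the unique solution of $f(x+1)=f(x)$, shifted vertically), so both contain $P_{j1}$, which now shares a class with $(x_j-1,0)$ and $(x_j-1,1)$. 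The verticals of stripe $0$ are handled by putting the ideal point of the vertical lines into $C_{00}$, and the three remaining ideal points $P_{01},P_{02},P_{03}$ form one new class, which also makes $\ell_\infty$ rainbow-free; the count then comes out to exactly $(q^2+q)/3$ classes, all of size $3$ except the single class $C_{00}$ of size $4$. Without classes containing ideal points of the translate lines, your affine-only recoloring scheme cannot be completed.
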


If $q\not\equiv 0 \pmod 3$, we manage to give a coloring of the affine plane of order $q$
represented by a suitable planar function, thus we get a lower bound on the balanced upper
chromatic number. Furthermore, as a consequence we have a coloring of the corresponding
projective plane which means that we have a lower bound on the balanced upper chromatic
number of any projective plane of order $q$ represented by a planar function.

\begin{theorem}\label{planar}
For $q\not\equiv 0 \pmod{3}$ and $p>5$, let $A_q$ and $\Pi_q$ be the affine and projective plane of order $q=p^h$ represented by a planar function, respectively. Then
\[
    \ob(A_q) \ge \left \{ \begin{array}{ll}
    \frac{\left ( 1 - \frac{1}{p}  \right )q^2}{3} & \mathrm{if~} p=3k+1, \\
    \frac{\left ( 1 - \frac{2}{p}  \right )q^2}{3} & \mathrm{if~} p=3k+2;
    \end{array} \right .
\]
\[
    \ob(\Pi_q) \ge \left \{ \begin{array}{ll}
    \frac{q^2+q-1-\frac{q^2}{p}}{3} & \mathrm{if~} p=3k+1, \\
    \frac{q^2+q+1-\frac{2q^2}{p}}{3} & \mathrm{if~} p=3k+2,~h~\mathrm{odd}, \\
    \frac{q^2+q-1-\frac{2q^2}{p}}{3} & \mathrm{if~} p=3k+2,~h~\mathrm{even}.
    \end{array} \right .
\]
\end{theorem}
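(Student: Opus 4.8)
The plan is to construct an explicit balanced rainbow-free coloring of the affine plane $A_q$ coming from a planar function, using the group-theoretic structure of a relative difference set, and then to extend it to the projective plane $\Pi_q$ by absorbing the points at infinity. Recall that a planar function $f\colon \GF(q)\to \GF(q)$ gives coordinates $(x,y)$ for the affine points, and the lines through a fixed direction together with the graph-of-$f$ structure let one identify a $\GF(q)$-worth of parallel classes. The key idea is to build the color classes so that each class is contained in a single line (or meets the structure in a controlled way) so that rainbow-freeness comes for free: if a color class has at least two points and those two points lie on a common line, then that line is not rainbow. So first I would set up the affine coordinatization and describe the natural partition of the $q^2$ affine points into lines of a chosen pencil, giving $q$ lines of $q$ points each.

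\medskip

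The heart of the argument is a counting/divisibility bookkeeping that differs according to the residue of $p$ modulo $3$. On each line of $q$ points I want to cut the $q$ points into color classes of size $3$ (and a few of size $4$) so that almost all classes have size $3$; since $3\mid q^2$ is false in general, the obstruction is exactly the fraction of points that cannot be packed into triples lying on lines, and this is where the factors $1-\tfrac1p$ and $1-\tfrac2p$ enter. Concretely, for $p=3k+1$ I expect to sacrifice a $\tfrac1p$-fraction of the points (one residue class of lines, or one sub-line pattern), coloring them with larger classes or leaving them to be merged, which yields $\left(1-\tfrac1p\right)q^2/3$ triples as the count of color classes; for $p=3k+2$ the arithmetic of $q=p^h$ modulo $3$ forces sacrificing a $\tfrac2p$-fraction instead. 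The condition $p>5$ is presumably needed to guarantee enough room in each line to realize the required packing (so that the leftover points can always be grouped into valid monochromatic-on-a-line triples or quadruples without creating a rainbow line).

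\medskip

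For the projective bounds I would take the affine coloring and handle the $q+1$ points of the line at infinity. The plan is to distribute these infinite points among existing color classes or to form a bounded number of new classes, while preserving both balancedness (all class sizes within one of each other) and rainbow-freeness (the line at infinity itself must contain a repeated color, and every affine line extended by its infinite point must not become rainbow). The three sub-cases in the projective estimate — $p=3k+1$, $p=3k+2$ with $h$ odd, and $p=3k+2$ with $h$ even — reflect how $q^2+q+1$ interacts with $3$ and with the leftover count from the affine stage, changing the additive constant ($+q-1$ versus $+q+1$) by exactly the number of full triples one can or cannot complete at infinity.

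\medskip

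The main obstacle I anticipate is simultaneously enforcing balancedness and rainbow-freeness while extracting the \emph{exact} leading constants in the bound. Rainbow-freeness is easy to get from monochromatic pairs on lines, but once we insist that \emph{every} color class has size $3$ or $4$ (to keep the class count as large as possible) we lose the freedom to hide repeated colors arbitrarily, and we must verify that each of the $q^2+q+1$ lines still meets some class in two points. I expect the delicate part to be the boundary accounting: showing that the number of points forced into oversized classes (and hence the deficit from the ideal $(q^2+q+1)/3$) is exactly the stated $\tfrac1p$ or $\tfrac2p$ fraction, no more. This will require a careful parity/residue analysis of how the planar function partitions directions, and is where the case split on $h$ parity in the projective statement genuinely bites.
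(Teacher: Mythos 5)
Your core mechanism for rainbow-freeness does not work. You propose to partition the affine points into the $q$ lines of a fixed pencil and to cut each such line into color classes of size $3$ (and a few of size $4$), so that every class is contained in a line of that pencil. But a color class contained in a line resolves only that one line: any other line of the plane meets each line of the chosen pencil in exactly one point, hence meets every color class in at most one point, and is therefore rainbow. So under your plan all $\sim q^2$ lines outside the chosen parallel class are rainbow, which is as far from rainbow-free as possible. The observation ``if two points of a class lie on a common line, then that line is not rainbow'' is true but does not come for free in the way you need: it takes care of the $q$ lines of the pencil only, and the whole difficulty of the theorem is to place \emph{non-collinear} classes so that each of the remaining lines picks up a monochromatic pair. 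Your proposal contains no device for this; the subsequent divisibility bookkeeping (the $1-\frac{1}{p}$ and $1-\frac{2}{p}$ factors) is a restatement of the target bound rather than a derivation, so the gap is not a technicality one could patch.

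The idea you are missing is the normalization trick that drives the paper's construction (the planar-function analogue of the $\{0,1,3\}$ difference-set argument of Section 2): one may assume $f(0)=f(1)=0$ and $f(2)=1$, and then every non-vertical line, being a translate of the graph of $f$, contains three points of the shape $(a,b)$, $(a+1,b)$, $(a+2,b+1)$. The paper colors the $p\times p$ grid by an explicit pattern of ``singles'' and horizontally adjacent ``doubles'' on consecutive rows, each $3$-element class consisting of a single point on one row together with an adjacent pair on the next row (deliberately \emph{not} collinear), shifted by $3$ from row to row, so that in every configuration $(a,b),(a+1,b),(a+2,b+1)$ two of the three points share a color; one size-$4$ class per row (two per row when $p\equiv 2\pmod 3$) is what makes the vertical lines non-rainbow, and for $h>1$ the $q\times q$ plane is tiled by such $p\times p$ grids. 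The counts $\frac{q^2}{p}$ resp.\ $\frac{2q^2}{p}$ of $4$-element classes, whence the stated factors, come out of this concrete pattern, and this is also where $p>5$ genuinely matters: the paper notes that for $p=5$ no balanced coloring with classes of size $3$ and $4$ exists for this representation. Your projective-extension step (new classes of size $3$ and $4$ placed on the ideal line, with the case split governed by $q+1 \bmod 3$) does match the paper's, but without a working affine construction underneath it the theorem is not proved.
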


The fourth section is dedicated to a probabilistic argument which will
give us a general lower bound on the balanced upper chromatic number
of projective planes.

\begin{theorem}\label{arbitrary} Let $\Pi_q$ be an arbitrary projective plane of order $q> 133$. Then its balanced upper chromatic number can be  bounded from below as $$ \overline{\chi}_b(\Pi_q)\geq \frac{q^2+q-16}{10}.$$
\end{theorem}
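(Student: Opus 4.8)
The plan is to build a balanced rainbow-free coloring with about $v/10$ classes (where $v=q^2+q+1$) by the probabilistic method with alteration, taking the class size $s\approx 10$ so that $N\approx v/s$ meets the target $(q^2+q-16)/10$. First I would take a uniformly random balanced partition of the $v$ points into $N$ classes, all of size $s$ or $s+1$. For a fixed line $\ell$ (a set of $q+1$ points), the probability that $\ell$ is rainbow is a multivariate hypergeometric product roughly $\prod_{i=0}^{q}\frac{v-is}{v-i}$, which I would bound above by $\exp\!\big(-(s-1)\binom{q+1}{2}/v\big)\approx e^{-(s-1)/2}$, using $\binom{q+1}{2}/v\to \tfrac12$. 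Consequently the expected number of rainbow lines is at most about $v\,e^{-(s-1)/2}$, a small constant fraction of the $v$ lines when $s\approx 10$ (for $s=10$ this is roughly $0.011\,v$). Fixing a coloring that attains at most this expected number, I obtain a balanced coloring with $N\approx v/10$ classes in which only a small fraction of the lines are still rainbow.

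Next I would destroy the remaining rainbow lines by local recolorings rather than merges, so that the number of classes $N$ is preserved throughout. For a rainbow line $\ell$ I recolor one of its points to agree with another point of $\ell$, which creates a monochromatic pair covering $\ell$. The key efficiency is that moving a point into a class of size $s$ creates $s$ new monochromatic pairs on $s$ distinct lines, so a greedily chosen sequence of recolorings can cover many rainbow lines simultaneously; in any case the total number of recolorings stays a small fraction of $v$. Since recoloring only shifts points between existing classes, the count $N$ does not drop, and the whole cost of the alteration is absorbed into a controlled perturbation of the class \emph{sizes}.

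Finally I would restore the exact balance condition—sizes differing by at most one, i.e.\ classes of size $10$ together with a bounded number of size $11$—by exchanging points between over-full and under-full classes, and then track all constants explicitly to reach $\overline{\chi}_b(\Pi_q)\ge (q^2+q-16)/10$ for $q>133$. The hard part will be the interaction between the alteration and the rebalancing: recoloring a point not only creates the covering pair I want but also destroys the $s-1$ pairs the point previously belonged to, so a careless fix can turn a previously covered line rainbow again, and the rebalancing swaps carry the same danger. The main technical work is therefore to show that the recolorings and swaps can be chosen to be \emph{safe}—never uncovering a line unless it is simultaneously re-covered—while keeping all class sizes within one of each other. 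Optimizing $s$ against the expected number of rainbow lines is what pins down the constant $1/10$, and making every hypergeometric estimate, rounding, and the final balanced decomposition effective is what produces the additive correction $-16$ and the threshold $q>133$.
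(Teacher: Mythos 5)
Your Step 1 (a uniformly random balanced coloring plus a first-moment bound on rainbow lines) is sound and runs parallel to the paper's own probabilistic estimate, but the proof collapses at the alteration step, and you in fact name the fatal issue yourself without resolving it. When you recolor a point $P$ out of its class $A$ and into the class $B$ of another point of a rainbow line $\ell$, you destroy every monochromatic pair $\{P,a\}$ with $a\in A\setminus\{P\}$; these pairs lie on up to $s-1=9$ distinct lines, each of which may have been resolved \emph{only} by that pair. So each fix can create up to nine new rainbow lines, and nothing in your proposal (no potential function, no lemma guaranteeing that ``safe'' recolorings always exist) shows that this process terminates with all lines resolved, let alone that the class sizes can simultaneously be kept within one of each other; your rebalancing swaps face exactly the same objection. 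Declaring that ``the main technical work is to show the recolorings and swaps can be chosen safe'' is not a plan --- it is precisely the content of the theorem, and it is absent.

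The paper avoids this cascade structurally rather than repairing it combinatorially: it colors, at random, only the points on $t=\lceil cq\rceil$ (with $c=0.77$) lines through a fixed point $Q$, in classes of size $9$ plus one distinctly colored pair per line, deliberately leaving the remaining roughly $(1-c)q\cdot q$ points as an \emph{uncolored reserve}. Each still-unresolved line is then assigned a distinct uncolored point on it, and that point receives a color already present on the line; the existence of such a system of distinct representatives (both for the points and for the colors) is proved by two applications of Hall's marriage theorem combined with the point--line incidence bound $I(\mathcal{P},\mathcal{L})\leq \min\left\{ |\mathcal{P}|\sqrt{|\mathcal{L}|}+|\mathcal{L}|, |\mathcal{L}|\sqrt{|\mathcal{P}|}+|\mathcal{P}|\right\}$. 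Since the completion only ever colors previously uncolored points (and tops size-$9$ classes up to size $10$), no line can ever lose a monochromatic pair, so no cascade is possible; the final bookkeeping yields classes of size $10$ with at most $17$ classes of size $11$, which is where the bound $(q^2+q-16)/10$ and the threshold $q>133$ come from. If you wish to keep a pure random-coloring-plus-alteration scheme, you must supply the missing termination/safety argument, or restructure as the paper does so that the question never arises.
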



\section{Difference sets containing $\{0,1,3\}$}

We recall the definition of a difference set.

\begin{definition}
Let $G$ be a group of order $v$. A
$(v,k,\lambda)$\emph{-difference set} is a subset $D \subset G$ of size $k$
such that every nonidentity (nonzero) element of $G$ can be expressed as
$d_1d_2^{-1}$ (or $d_1-d_2$ if we use additive notation)
of elements $d_1, d_2 \in D$ in exactly $\lambda$ ways.
\end{definition}

Singer \cite{Singer} proved $\PG(2,q)$ admits a regular cyclic collineation
 group and thus can be
represented by a $(q^2+q+1,q+1,1)$-difference set in a cyclic (hence
abelian) group. For more details, see \cite{BJL}.

We start with a proof \cite{KissGy} of the fact mentioned above that
$
    \ob(\Pi_q) = \lfloor \frac{q^2+q+1}{3} \rfloor
$
if $\Pi_q$ comes from a difference set containing $\{0,1,3\}$.

\begin{proof} Let every $C_i$ in the partition consist of
three consecutive integers, with the possible exception of $C_1$ having
$4$ (this happens if $q=0,2$ mod $3$).
It is clear that this coloring is balanced with the above number of colors.
To show that no line is rainbow we note that every line contains a (unique)
 triple $\{j, j+1, j+3\}$. This triple is contained in the union of two
 consecutive  $C_i$'s, so by the pigeonhole principle two of them have the
 same color.
\end{proof}

We obtain a planar difference set by starting with a primitive cubic
polynomial $p(x)=x^3-ax^2-bx-c$ over $\GF(q)$ and now define the field
$\GF(q^3)=\GF(q)[x]/(p(x))$. Every monomial $x^i$ now reduces to a degree
(at most) 2 polynomial $c_2x^2+c_1x+c_0\equiv (c_2,c_1,c_0)\in\GF(q)^3$.
The exponents $i$, with $0\le i\le q^2+q$ for which $x^i$ lies in a
two-dimensional subspace now give a difference set. If we take the subspace
$c_2=0$, and if $a=0$, so $p(x)$ is of the form $x^3-bx-c$, then our
difference set will contain $0,1$ and $3$.

By a result S.D. Cohen \cite{Cohen} we know that a primitive polynomial
with this property exists for all $q\ne 4$. As a consequence, we get

\begin{proposition}
\[
    \ob(\PG(2,q)) = \left \lfloor \frac{q^2+q+1}{3} \right \rfloor.
\]
\end{proposition}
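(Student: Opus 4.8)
The plan is to combine Result~\ref{AKMupper} with the explicit field construction sketched just above the proposition, so that the whole argument reduces to producing, for every relevant $q$, a planar difference set for $\PG(2,q)$ that contains the configuration $\{0,1,3\}$. Once such a difference set is available, Result~\ref{AKMlower} immediately gives the lower bound $\ob(\PG(2,q)) = \lfloor (q^2+q+1)/3\rfloor$, while Result~\ref{AKMupper} gives the matching upper bound $\ob(\Pi_q)\le (q^2+q+1)/3$; since $\ob$ is an integer, the two together force equality with the floor.

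The core of the work is therefore the existence of the right difference set, and my first step would be to justify why the recipe stated above does produce $\{0,1,3\}$. Writing $\GF(q^3)=\GF(q)[x]/(p(x))$ with $p(x)=x^3-bx-c$ primitive, the Singer construction identifies the difference set with the exponents $i$ for which $x^i$ lands in the chosen two-dimensional subspace $\{c_2=0\}$, i.e.\ in $\langle 1,x\rangle$. One checks directly that $x^0=1$ and $x^1=x$ lie in this subspace, contributing the exponents $0$ and $1$. The role of the hypothesis $a=0$ (so that $p$ has no $x^2$ term) is exactly that $x^3 = bx+c$ reduces to a polynomial with vanishing $x^2$-coefficient, so $x^3$ also lies in $\langle 1,x\rangle$ and contributes the exponent $3$. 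Thus $\{0,1,3\}$ is a subset of the difference set whenever a primitive polynomial of the special form $x^3-bx-c$ exists.

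The main obstacle is precisely this existence question: one must guarantee a primitive polynomial of the restricted shape $x^3-bx-c$ (trace-zero in the sense that the $x^2$-coefficient vanishes) over every $\GF(q)$, rather than an arbitrary primitive cubic. This is not automatic, and it is exactly where the cited result of S.~D.\ Cohen~\cite{Cohen} is invoked: it asserts that a primitive polynomial of this form exists for all $q$ except $q=4$. I would therefore simply quote Cohen's theorem to dispose of every $q\ne 4$, and then handle $q=4$ as an isolated case.

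For the exceptional value $q=4$ one cannot use the field recipe, so I would fall back on exhibiting a suitable cyclic (planar) difference set in $\mathbb{Z}_{21}$ directly, or else verifying the claim for $\PG(2,4)$ by an \emph{ad hoc} coloring meeting the bound of Result~\ref{AKMupper}. Concretely, it suffices to find any $(21,5,1)$-difference set in $\mathbb{Z}_{21}$ that (after a suitable translation and multiplier) contains three elements in the pattern $\{0,1,3\}$; such a set exists and can be checked by hand, after which Result~\ref{AKMlower} applies verbatim. Combining the generic case $q\ne 4$ from Cohen with this single small verification completes the proof that $\ob(\PG(2,q))=\lfloor (q^2+q+1)/3\rfloor$ for all $q$.
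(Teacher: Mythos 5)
For $q\ne 4$ your argument coincides with the paper's: the Singer difference set built from a primitive cubic of the shape $x^3-bx-c$ contains $0,1,3$ (your observation that $a=0$ forces $x^3=bx+c\in\langle 1,x\rangle$ is exactly the paper's point), Cohen's theorem \cite{Cohen} supplies such a polynomial for all $q\ne 4$, and Results \ref{AKMupper} and \ref{AKMlower} then give equality. The genuine gap is your treatment of the exceptional case $q=4$. You claim that some $(21,5,1)$-difference set in $\mathbb{Z}_{21}$ contains the pattern $\{0,1,3\}$ after a suitable translation and multiplier, and that this ``can be checked by hand.'' The hand check in fact shows the opposite. Take the standard difference set $D=\{0,1,6,8,18\}$; every $(21,5,1)$-difference set is equivalent to it, i.e.\ of the form $mD+g$ with $\gcd(m,21)=1$. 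Such a copy contains $\{0,1,3\}$ if and only if there exist $d_1,d_2,d_3\in D$ and a unit $m$ with $m(d_2-d_1)=1$ and $m(d_3-d_1)=3$; equivalently, some base point $d_1\in D$ admits two differences $u$ and $3u$ with $u$ a unit modulo $21$. The difference lists from the five base points are $\{1,6,8,18\}$, $\{20,5,7,17\}$, $\{15,16,2,12\}$, $\{13,14,19,10\}$, $\{3,4,9,11\}$. Since $3u$ is divisible by $3$, only the differences $6,18$ (base $0$), $15,12$ (base $6$) and $3,9$ (base $18$) are candidates for $3u$, and solving $3u\equiv \delta \pmod{21}$ in each case produces only values of $u$ that are not unit differences from the same base point. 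So no $(21,5,1)$-difference set contains $\{0,1,3\}$; this failure mirrors precisely the exception $q=4$ in Cohen's theorem (there is no primitive cubic of the form $x^3-bx-c$ over $\GF(4)$).

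Consequently, the route you single out as the concrete one for $q=4$ is impossible, and that case must be settled by your other, unelaborated alternative: an explicit balanced rainbow-free coloring of $\PG(2,4)$ with $\lfloor 21/3\rfloor=7$ color classes. This is how the paper proceeds—it simply cites \cite{KissGy}, where the case $q=4$ is covered. Without supplying such a coloring (or a correct substitute argument), your proof is incomplete at $q=4$, and the specific claim you make to handle it is false.
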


Note that the case $q=4$ has already been covered in \cite{KissGy}.

\section{Improving the lower bound on $\ob(\Pi_q)$ for certain classes
         of non-desarguesian planes}

We recall the proof of Theorem 2.3. in \cite{KissGy}.
For $0 \le i \le \frac{q^2+q+1}{3} -1$ define
the color classes as $C_i = \left \{ i, i+\frac{q^2+q+1}{3},
i+\frac{2(q^2+q+1)}{3} \right \}$. Since each line contains a
(unique) pair of points with difference $\frac{q^2+q+1}{3}$,
having therefor the same color, there are no rainbow
lines. Together with Result
\ref{AKMupper} this gives $\ob (\Pi_q) = \frac{q^2+q+1}{3}$, if
$q\equiv 1 \pmod{3}$.

In the following subsections we are going to investigate other
representations and improve some of the bounds.

\subsection{Using affine difference sets if $q\equiv 2 \pmod{3}$}
Our aim in this case is to use
affine difference sets and the corresponding
representation of affine planes and then add the ideal points to the
construction and color them in a suitable way.

\begin{definition}
Let $G$ be a group of order $q^2-1$, and let $N$ be a normal subgroup of
order $q-1$ of $G$. A $q$-subset $D$ of $G$ is called an
\emph{affine difference set} of order $q$ if $\{ d_1 d_2^{-1}:
d_1\ne d_2 \in D\} = G \setminus N$.
\end{definition}

An affine difference set $D$ gives rise to an affine plane (and hence to a
projective plane $\Pi_q(D)$) as follows: Points of the plane are the
elements of $G$, together with a special point $O$ (the origin),
lines through $O$ are the cosets of $N$, the remaining lines are of the
form $Dg$, $g\in G$.
We refer the reader to \cite{Bose}, \cite{Jung} and \cite{Hof} for further
 information about affine difference sets.

\begin{proof}[Proof of Theorem \ref{aff}]
We are going to define a coloring of the points in the orbit of size $q^2-1$,
and then give a suitable
coloring of the origin and the ideal points of the projective closure.
Similarly as above define the color classes as the right cosets of
a subgroup $T=\{1,t,t^2\}$ for a fixed element $t$ of order three,
so the color classes are of the form $C_g = \{ g, tg, t^2g \}$.
Note that $|N|$ is not divisible by $3$, so $t\not\in N$.
Since every element of $G\setminus N$ in particular $t$ is of the
form $d_1d_2^{-1}$ exactly once, this means that there will be two points
with the same color in every line which avoids the origin.

There are two things left to do: the first one is to color the origin and the
points of the ideal line in order to make sure that neither the lines through
the origin nor the ideal line are monochromatic.
This can be done in a greedy way. The origin, together with
three ideal points get a new color, the remaining $q-2$ ideal points $P$
get the color of one of the points on the line $OP$ in such a way that
no color is used twice (so altogether at most four times).

Observe that we now  indeed get a balanced coloring of the projective
plane and there are $\frac{q^2-1}{3}+1 = \frac{q^2+2}{3}$ color classes
such that exactly $q-1$ of them have 4 elements, the others have only 3
and there are no monochromatic lines.
\end{proof}

The value of the above result is questionable, since all known examples
of such planes are desarguesian.

\subsection{Using planar functions if $q\equiv 0 \pmod{3}$}

If $q=3^h$ for some $h\ge 1$ then we will use a representation of a
projective plane $\Pi_q(f)$ based on planar functions.

\begin{definition}
A function $f:\GF (q)\rightarrow \GF (q)$ is a {\em planar function} if the
equation $f(x+a)-f(x)=b$ has a unique solution in $x$ for every $a\not= 0$
 and every $b\in \GF (q)$.
\end{definition}

A planar function gives rise to an affine plane, and hence a projective plane
as follows. The point set will be the same as in
$\AG(2,q)$, the vertical lines with their ideal point remain the same but we
will replace every non-vertical line with a translate of the graph of $f$.
Note that parallel lines correspond to translations of $f$ that differ by
a vertical translation, and the ideal point of these translates can be
defined according to this.

We will
also assume that $f(0)=f(1)=0$. Moreover, let $H = \{-1,0,1\}$ be a
3-element subgroup in $(\GF(q), +)$, $q\equiv 0 \pmod{3}$. Thus the cosets of
the subgroup generated by $H$ and the vertical line through the origin
give us a partition of the point set into $\frac{q}{3}$ stripes.

\begin{proof}[Proof of Theorem \ref{par}]
The main idea in our coloring is to color in each of these stripes the 3
points which have the same second coordinate with the same color but do it in
such a way that the points with different second coordinates must have
pairwise different colors. Let us choose a representative system of the
cosets: $\{x_0,x_1,\ldots,x_{\frac{q}{3}-1}\}$. Let us denote the stripe
which contains $x_j$ with $S_j$. Therefore $S_j$ will correspond to the
points \mbox{$\{(x_j-1,y), (x_j,y), (x_j+1,y)\}$} for  $y\in \GF (q)$.

To begin with let us color the points $\{(x_0-1,0),(x_0,0),(x_0+1,0)\}$ from
$S_0$ and the ideal point of the vertical lines with the same color (let us
call it $C_{00}$). We can continue the coloring in $S_0$ by coloring the
triples $\{(x_0-1,y),(x_0,y),(x_0+1,y)\}$ with color $C_{0y}$. Similarly for
any $j=1,\ldots,\frac{q}{3}-1$ color the points $\{(x_j-1,y), (x_j,y),
(x_j+1,y)\}$ from $S_j$ with color $C_{jy}$. Notice that for any $j$ and $y$
the color $C_{jy}$ must be pairwise different.

Although for this coloring there will be no rainbow translates of $f$,
almost all of the vertical lines are rainbow. That is the reason why we
will modify this coloring a little bit. For every $j=1,\ldots,\frac{q}{3}-1$
delete $C_{j0}$ and $C_{j1}$. For every $j=0,\ldots,\frac{q}{3}-1$ denote
the ideal point of the translate of $f$ which goes through $(x_j-1,0)$ and
$(x_j,0)$ with $P_{j1}$; through $(x_j,0)$ and $(x_j+1,0)$ with $P_{j2}$;
through $(x_j-1,0)$ and $(x_j+1,0)$ with $P_{j3}$. Introduce new color
classes for every $j=1,\ldots,\frac{q}{3}-1$:
\[
    C_{j\alpha} = \{(x_j-1,0),(x_j-1,1),P_{j1}\}
\]
\[
    C_{j\beta} = \{(x_j,0),(x_j,1),P_{j2}\}
\]
\[
    C_{j\gamma} = \{(x_j+1,0),(x_j+1,1),P_{j3}\}.
\]

With this modification we certainly achieved that now every vertical line has
two points with the same color. Moreover, by coloring the appropriate ideal
points with these new colors we achieved that there are no rainbow
parabolas. But on the ideal line all of the points have pairwise different
colors so far. Notice that we did not color $P_{01}$, $P_{02}$ and $P_{03}$
yet. If we color these 3 points with a new color then this will take care of
the ideal line, too. One can see that there is only one color class $C_{00}$
which has 4 elements hence we used exactly $\frac{q^2+q}{3}$ color classes.
\end{proof}

\begin{figure}[!h]
    \centering
    \includegraphics[width=13cm]{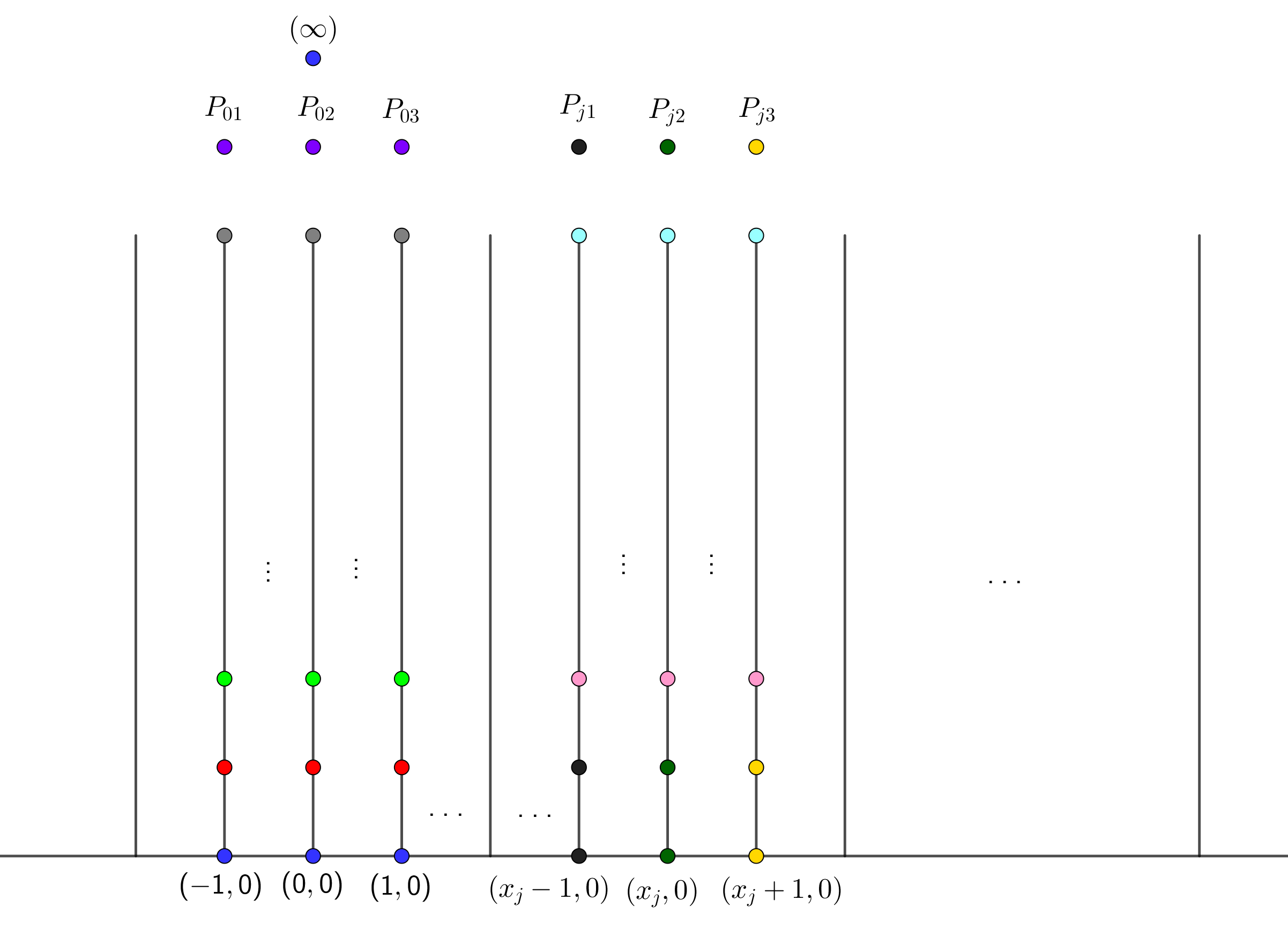}
    \caption{Modified coloring if $q \equiv 0 \pmod{3}$}
    \label{fig:my_label}
\end{figure}

\begin{remark}
In \cite{CoMat,DO} {Dembowski, Ostrom, Coulter, Matthews} showed that there
are planar functions so that the represented geometry
is not desarguesian.
\end{remark}

\newpage
\subsection{Using planar functions if $q\not \equiv 0 \pmod{3}$}

We discuss here two further constructions which give a bit weaker results for the desarguesian
projective plane but it completes the constructions for any projective plane which can be
represented with planar functions (which is a strictly larger class). Moreover, these constructions
give us a lower bound on the balanced upper chromatic number of the affine planes represented by
planar functions, too.

\begin{proof}[Proof of Theorem \ref{planar}]
Let $f$ be a planar function with $f(0)=f(1)=0$. Without loss of generality we can assume that $f(2)=1$ (otherwise
we can divide every value of $f$ with the value of $f(2)$). If $p=3k+1$ is a prime, $q=p^h$, then we will color
the affine plane of order $p$ represented by $f$. Every color class will be on two consecutive horizontal lines $y=c$ and $y=c+1$.

Point $(0,0)$ has color $1$, points $(1,0)$ and $(2,0)$ have color $2$, point $(3,0)$ has color $3$ and this pattern is repeated
until $(3k-3,0)$ (which is a single point). The last color class has 3 consecutive points, $(3k-2,0),(3k-1,0),(3k,0)$. On the line
$y=1$, the same pattern appears but everything is shifted by $x\to x+3$ so that the single color classes have pairwise different new
colors, and the pairs with the same color inherit their color from the single element of the previous line. More precisely, the points
$(1,1)$ and $(2,1)$ get color $1$, then point $(3,1)$ gets a new color, then $(4,1),(4,2)$ get the color of $(3,0)$, etc. At the end,
$(3k,1)$ gets a new color, and finally $(0,1)$ gets color $1$, too. With this coloring, on each horizontal line there will be three
consecutive points having the same color (these points are in a color class of size 4), the remaining color classes will have size 3.
After $p$ steps, we get back to the coloring on line $y=0$. Notice that this coloring also make the vertical lines rainbow-free.

If $p=3k+2$ and $p>5$ is a prime, $q=p^h$, then the pattern changes a little bit. We need to finish sooner the alternating sequence of single and
double classes, namely at $(3k-6,0)$, and then close with two 3 element classes separated with a single element with a new color. We
include the examples for $p=7$ and 11 in Figure \ref{peldak}.

\begin{figure}[!h]
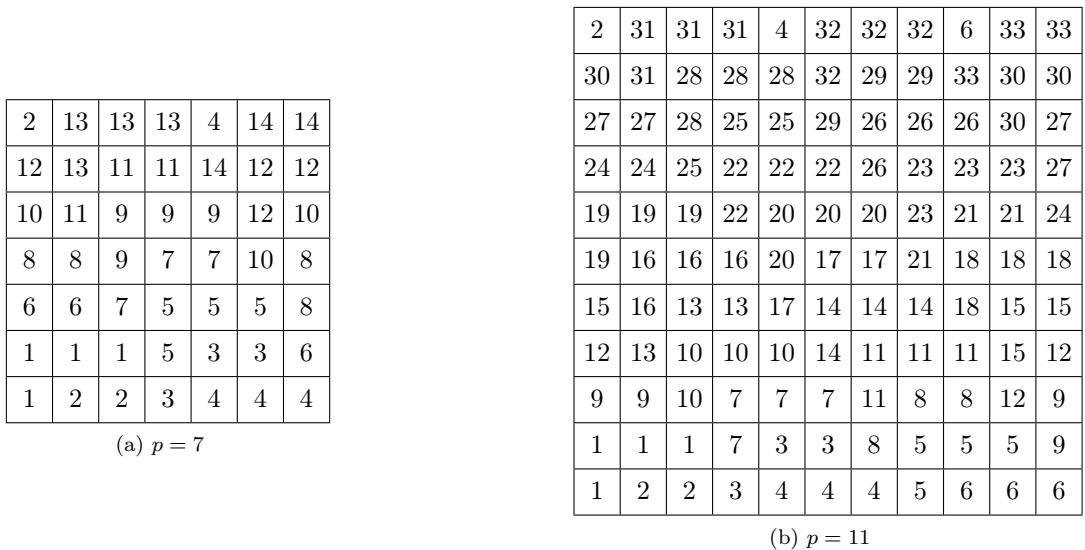

  \centering
  \subfloat[$p=7$ \label{7pelda}]{\setlength\tabcolsep{0pt}
\begin{tabular}{|@{\rule[-0.2cm]{0pt}{0.6cm}}*{7}{M{0.6cm} |}}
\hline
2 & 13 & 13 & 13 & 4 & 14 & 14 \\ \hline
12 & 13 & 11 & 11 & 14 & 12 & 12 \\ \hline
10 & 11 & 9 & 9 & 9 & 12 & 10 \\ \hline
8 & 8 & 9 & 7 & 7 & 10 & 8 \\ \hline
6 & 6 & 7 & 5 & 5 & 5 & 8 \\ \hline
1 & 1 & 1 & 5 & 3 & 3 & 6 \\ \hline
1 & 2 & 2 & 3 & 4 & 4 & 4 \\ \hline
\end{tabular}}
  \hspace{3cm}
  \subfloat[$p=11$ \label{11pelda}]{\setlength\tabcolsep{0pt}
\begin{tabular}{|@{\rule[-0.2cm]{0pt}{0.6cm}}*{11}{M{0.6cm} |}}
\hline
2 & 31 & 31 & 31 & 4 & 32 & 32 & 32 & 6 & 33 & 33 \\ \hline
30 & 31 & 28 & 28 & 28 & 32 & 29 & 29 & 33 & 30 & 30 \\ \hline
27 & 27 & 28 & 25 & 25 & 29 & 26 & 26 & 26 & 30 & 27 \\ \hline
24 & 24 & 25 & 22 & 22 & 22 & 26 & 23 & 23 & 23 & 27 \\ \hline
19 & 19 & 19 & 22 & 20 & 20 & 20 & 23 & 21 & 21 & 24 \\ \hline
19 & 16 & 16 & 16 & 20 & 17 & 17 & 21 & 18 & 18 & 18 \\ \hline
15 & 16 & 13 & 13 & 17 & 14 & 14 & 14 & 18 & 15 & 15 \\ \hline
12 & 13 & 10 & 10 & 10 & 14 & 11 & 11 & 11 & 15 & 12 \\ \hline
9 & 9 & 10 & 7 & 7 & 7 & 11 & 8 & 8 & 12 & 9 \\ \hline
1 & 1 & 1 & 7 & 3 & 3 & 8 & 5 & 5 & 5 & 9 \\ \hline
1 & 2 & 2 & 3 & 4 & 4 & 4 & 5 & 6 & 6 & 6 \\ \hline
\end{tabular}}
  \caption{Colorings of affine planes represented by planar functions}
  \label{peldak}
\end{figure}

If $h>1$ then we can extend these colorings of the similar $p\times p$ grids and get a balanced coloring of the affine plane defined by the planar
function $f$ in both cases.

These colorings use $p$ and $2p$ 4-element classes in every $p\times p$ grid, respectively. In total there are $\frac{q^2}{p^2}$ such grids which
means that in the affine plane of order $q$ the number of the 4-element color classes are

\[
\frac{q^2}{p}, \mathrm{~if~} p=3k+1 \qquad\qquad \mathrm{~and~} \qquad\qquad \frac{2q^2}{p}, \mathrm{~if~} p=3k+2.
\]

We can use these constructions in order to give a balanced coloring of the projective plane, too. Since the affine lines are rainbow-free, we can
arbitrarily color the points of the ideal line with 3 and 4 element color classes. Obviously we use the most colors if we use as many 3 element
color classes on the ideal line as we can. If $p=3k+1$ then $q+1\equiv 2 \pmod 3$ therefore in the ideal line there must be at least two 4-element
color classes. If $p=3k+2$ then the remainder of $q+1$ when divided by 3 depends on the parity of $h$. If $h$ is odd then $q+1\equiv 0 \pmod 3$, and if
even then $q+1 \equiv 2 \pmod 3$. In the following table we calculated the number of color classes of size 3 and 4 in every possible setup.

\begin{table}[!h]
\begin{tabular}{c|c|c|c|c|c|c|}
\cline{2-7}
\multirow{2}{*}{}                                                                          & \multicolumn{2}{c|}{$p=3k+1$}                                                                                                & \multicolumn{2}{c|}{$p=3k+2$, $h$ odd}                                                                                      & \multicolumn{2}{c|}{$p=3k+2$, $h$ even}                                                                                     \\ \cline{2-7}
                                                                                           & \begin{tabular}[c]{@{}c@{}}on the\\ ideal line\end{tabular} & \begin{tabular}[c]{@{}c@{}}in the \\ affine plane\end{tabular} & \begin{tabular}[c]{@{}c@{}}on the\\ ideal line\end{tabular} & \begin{tabular}[c]{@{}c@{}}in the\\ affine plane\end{tabular} & \begin{tabular}[c]{@{}c@{}}on the\\ ideal line\end{tabular} & \begin{tabular}[c]{@{}c@{}}in the\\ affine plane\end{tabular} \\ \hline
\multicolumn{1}{|c|}{\begin{tabular}[c]{@{}c@{}}the number of \\3-element color classes\end{tabular}} & $\frac{q-7}{3}$                                             & $\frac{q^2}{3}\cdot \frac{p-4}{p}$                             & $\frac{q+1}{3}$                                             & $\frac{q^2}{3}\cdot \frac{p-8}{p}$                            & $\frac{q-7}{3}$                                             & $\frac{q^2}{3}\cdot \frac{p-8}{p}$                            \\ \hline
\multicolumn{1}{|c|}{\begin{tabular}[c]{@{}c@{}}the number of \\4-element color classes\end{tabular}} & 2                                                           & $\frac{q^2}{p}$                                                & 0                                                           & $\frac{2q^2}{p}$                                              & 2                                                           & $\frac{2q^2}{p}$                                               \\ \hline
\end{tabular}
\end{table}
\end{proof}


The construction for $p=3k+2$ does not work for $p<11$. What can we say about $p=5$? Surprisingly, it turned out that for $p=5$ there is no balanced coloring of the affine plane of order 5 represented by a planar function $f$ with color classes of size 3 and 4. Moreover, none exists if there is at least one color class of size 4. These claims can be shown by a rather long case analysis which we choose to omit. However, by a computer search we found out that there exist a coloring such that all but one vertical line and one other line are rainbow, but we couldn't correct these errors by coloring the ideal points in order to get a balanced coloring of the projective plane of order 5 represented by a planar function $f$. It is straightforward to find a balanced coloring of the affine plane of order 5 with color classes of size 5 which can be generalized to get a balanced coloring for any affine and projective plane of order $q=5^h$ with roughly $\frac{q^2}{5}$ color classes. In Figure \ref{p5}, we included the above mentioned colorings of the affine plane of order 5.

\begin{figure}[!h]
  \centering
  \subfloat[,,almost'' good coloring for $p=5$  \label{5stefko}]{\setlength\tabcolsep{0pt}
\begin{tabular}{|@{\rule[-0.2cm]{0pt}{0.6cm}}*{5}{M{0.6cm} |}}
\hline
6 & 2 & 7 & \textcolor{red}{7} & \textcolor{red}{6} \\ \hline
4 & 7 & 5 & 6 & 4 \\ \hline
5 & 3 & 3 & 4 & 5 \\ \hline
3 & 1 & 1 & 5 & 1 \\ \hline
\textcolor{red}{1} & 2 & 2 & 2 & 3 \\ \hline
\end{tabular}}
  \hspace{3cm}
  \subfloat[with 5 element color classes \label{5with5}]{\setlength\tabcolsep{0pt}
\begin{tabular}{|@{\rule[-0.2cm]{0pt}{0.6cm}}*{5}{M{0.6cm} |}}
\hline
4 & 5 & 4 & 4 & 4 \\ \hline
3 & 3 & 4 & 3 & 3 \\ \hline
2 & 2 & 2 & 3 & 2 \\ \hline
1 & 1 & 1 & 1 & 2 \\ \hline
1 & 5 & 5 & 5 & 5 \\ \hline
\end{tabular}}
  \caption{Colorings of affine planes of order 5 represented by $f$}
  \label{p5}
\end{figure}


\section{General lower bound with a probabilistic approach}

In this section, we prove a general lower bound for all projective planes. In order to prove Theorem \ref{arbitrary}, we need a technical lemma which appeared in the paper of Nagy (\cite{ZLNagy}, Lemma 3.4).

\begin{lemma}\label{anal}  Denote $\prod_{i=1}^{k} \left(1-\frac{i}{n} \right)$ by $A_n(k)$. Then
	$$A_n(k)< \exp\left(-\left(\frac{k(k+2)}{2n-k-2}\right)\right)\Delta(n,k),$$
	
	where $\Delta(n,k)$ is the product of error terms $ \frac{\sqrt{n-1}}{\sqrt{n-k-1}}$, $ \left(1+  \frac{k^2}{12(n-k-1)^2}    \right)^k$ and $\left(1-\frac{(k+2)^2}{12n^2} \right)^{\frac{k(k+2)}{2n-k-2}}.$
\end{lemma}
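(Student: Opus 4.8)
The goal is to prove the estimate
\[
    A_n(k) = \prod_{i=1}^{k}\left(1-\frac{i}{n}\right) < \exp\left(-\frac{k(k+2)}{2n-k-2}\right)\Delta(n,k),
\]
so the plan is to pass to logarithms and compare the resulting sum against the rational function in the exponent. First I would write $\log A_n(k)=\sum_{i=1}^k \log\left(1-\frac{i}{n}\right)$ and use the standard series bound $\log(1-x)\le -x-\frac{x^2}{2}-\frac{x^3}{3}-\cdots$, or more practically the two-sided estimate $-x-\frac{x^2}{2(1-x)}\le \log(1-x)\le -x-\frac{x^2}{2}$ valid for $0\le x<1$. Summing the leading term gives $\sum_{i=1}^k \frac{i}{n}=\frac{k(k+1)}{2n}$, so the dominant contribution is already of the expected shape; the quadratic correction $\sum_{i=1}^k \frac{i^2}{2n^2}=\frac{k(k+1)(2k+1)}{12n^2}$ is what pushes the $\frac{k(k+1)}{2n}$ up to the sharper denominator $2n-k-2$ appearing in the claimed exponent.

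The key algebraic step is to verify the identity/inequality that reconciles the two forms of the main term: one should check that
\[
    \frac{k(k+2)}{2n-k-2} \le \frac{k(k+1)}{2n}+\frac{k(k+1)(2k+1)}{12n^2}
\]
up to the error factors, i.e. that the simple closed form $\frac{k(k+2)}{2n-k-2}$ is a valid lower bound for the true exponent sum once the collected error terms are absorbed into $\Delta(n,k)$. I would expand $\frac{k(k+2)}{2n-k-2}$ as a geometric series in $\frac{k+2}{2n}$ to see it as $\frac{k(k+2)}{2n}\bigl(1-\frac{k+2}{2n}\bigr)^{-1}$ and then match term by term against the log-expansion, so that each of the three stated error factors arises from one discrepancy: the factor $\frac{\sqrt{n-1}}{\sqrt{n-k-1}}$ from a telescoping/boundary adjustment in the sum, the factor $\bigl(1+\frac{k^2}{12(n-k-1)^2}\bigr)^k$ from bounding the tail of the higher-order log terms, and the factor $\bigl(1-\frac{(k+2)^2}{12n^2}\bigr)^{k(k+2)/(2n-k-2)}$ from the correction between the geometric expansion of the exponent and the true quadratic coefficient $\frac{2k+1}{12n^2}$.

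I expect the main obstacle to be the careful bookkeeping that shows the three error factors \emph{exactly} account for the gap between $\log A_n(k)$ and $-\frac{k(k+2)}{2n-k-2}$, rather than merely bounding it crudely; the inequality is tight enough that one cannot throw away constants freely, so each exponent and each denominator (note the appearance of $n-k-1$ versus $n$ in different factors) has to be tracked precisely. Since this lemma is quoted verbatim from \cite{ZLNagy}, the cleanest route is simply to invoke that reference and reproduce its derivation; if a self-contained argument is wanted, the delicate point is choosing the split of $\log(1-i/n)$ so that the cubic-and-higher remainder is controlled by $\bigl(1+\frac{k^2}{12(n-k-1)^2}\bigr)^k$ uniformly in $i$, which is where the worst term $i=k$ governs the estimate and where the denominator $n-k-1$ (the smallest value of $n-i$) naturally enters.
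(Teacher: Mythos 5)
First, a point of reference: the paper does not prove this lemma at all --- it is imported verbatim from \cite{ZLNagy} (Lemma 3.4) --- so your fallback of simply citing that reference is exactly what the authors do. The self-contained argument you sketch, however, breaks down at the step you yourself single out as the key one. Your plan is to bound $\log A_n(k)$ by the quadratic truncation $\log(1-x)\le -x-\tfrac{x^2}{2}$ and then verify
\begin{equation*}
\frac{k(k+2)}{2n-k-2}\;\le\;\frac{k(k+1)}{2n}+\frac{k(k+1)(2k+1)}{12n^2}+\log\Delta(n,k).
\end{equation*}
Write $T=\sum_{i=1}^{k}\bigl(\tfrac{i^3}{3n^3}+\tfrac{i^4}{4n^4}+\cdots\bigr)$ for the tail discarded by the truncation, and $G=-\frac{k(k+2)}{2n-k-2}+\log\Delta(n,k)-\log A_n(k)$ for the actual slack in the lemma; your displayed inequality is equivalent to $T\le G$. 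Expanding in powers of $1/n$ gives
\begin{equation*}
G=-\frac{6k^2+5k}{12n^2}+\frac{k^4}{12n^3}+O\!\left(\frac{k^3}{n^3}+\frac{k^5}{n^4}\right),
\qquad
T=\frac{k^4}{12n^3}+O\!\left(\frac{k^3}{n^3}+\frac{k^5}{n^4}\right),
\end{equation*}
so $T-G=\frac{k^2}{2n^2}\,\bigl(1+o(1)\bigr)>0$ whenever $k=o(n^{2/3})$. This covers precisely the regime in which the paper applies the lemma ($n=tq/8$, $k=t-1$, $t=\lceil 0.77q\rceil$, hence $k\approx 2.5\sqrt{n}$): there the lemma's entire slack is $G=\frac{k^2(k^2-6n)}{12n^3}(1+o(1))$, while your truncation throws away $T=\frac{k^4}{12n^3}(1+o(1))$, which exceeds all of $G$. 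In other words, the cubic term $\sum_i i^3/(3n^3)=\frac{k^2(k+1)^2}{12n^3}$ is load-bearing, and any proof discarding it is doomed: the ``key algebraic step'' of your plan is false exactly where the lemma is needed. A derivation that keeps all orders automatically --- e.g.\ Stirling--Robbins estimates applied to $A_n(k)=\frac{(n-1)!}{(n-k-1)!\,n^k}$, which is also what the shape of $\Delta$ (the ratio of square roots, the twelfths) points to --- is what is required, rather than the mechanisms you guess for the three factors.

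There is a second, independent gap: you treat the three factors of $\Delta(n,k)$ as slack that ``exactly accounts'' for the discrepancy for all $n,k$, but no bookkeeping can achieve this, because the statement as reproduced here (with no hypothesis tying $k$ to $n$) is false. For $n=100$, $k=10$ one computes $A_{100}(10)=0.56534\ldots$, while $\exp(-120/188)=0.52819\ldots$ and $\Delta(100,10)=1.06502\ldots$, so the claimed upper bound equals $0.56253\ldots<A_{100}(10)$. (Note also that the third factor of $\Delta$ is smaller than $1$, so it tightens rather than loosens the bound --- it cannot ``absorb'' anything.) The inequality only becomes true once $k^2/n$ is large enough (asymptotically $k^2\gtrsim 6n$; in the paper's application $k^2/n\approx 6.2$), a constraint that is implicit in how both this paper and \cite{ZLNagy} use the lemma, and which any correct proof --- including a repaired version of yours --- would have to invoke explicitly.
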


We continue the preparation with a well known bound on the number of incidences $I(\mathcal{P}, \mathcal{L})$ between a point set $\mathcal{P}$ and a line set $\mathcal{L}$

\begin{lemma} [Incidence bound, see \cite{Bourgain}]\label{incidence}
	
	$$ I(\mathcal{P}, \mathcal{L})\leq \min \left\{ |\mathcal{P}|\sqrt{|\mathcal{L}|}+|\mathcal{L}| ,    |\mathcal{L}|\sqrt{|\mathcal{P}|}+  |\mathcal{P}|\right\}.$$
\end{lemma}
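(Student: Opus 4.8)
The plan is to prove each of the two symmetric estimates inside the minimum by a standard double-counting argument exploiting the defining incidence property of a projective plane: two distinct points lie on a unique common line, and two distinct lines meet in a unique point. In particular, the point--line incidence structure contains no $K_{2,2}$, which is exactly the hypothesis that makes this type of argument run. I only need the ``at most one'' direction of each property, so the bound will in fact hold in any partial linear space.

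First I would establish $I(\mathcal{P},\mathcal{L}) \le |\mathcal{P}|\sqrt{|\mathcal{L}|}+|\mathcal{L}|$. For a line $\ell\in\mathcal{L}$ write $e(\ell)=|\ell\cap\mathcal{P}|$ for the number of chosen points on it, so that $I(\mathcal{P},\mathcal{L})=\sum_{\ell\in\mathcal{L}}e(\ell)$. Count the configurations consisting of a line $\ell\in\mathcal{L}$ together with an unordered pair of distinct points of $\mathcal{P}$ that both lie on $\ell$; their number equals $\sum_{\ell}\binom{e(\ell)}{2}$. On the other hand, since two distinct points determine at most one line, every pair of points of $\mathcal{P}$ is counted at most once, and therefore $\sum_{\ell}\binom{e(\ell)}{2}\le\binom{|\mathcal{P}|}{2}\le\tfrac12|\mathcal{P}|^2$. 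The next step is a convexity move: writing $x=I(\mathcal{P},\mathcal{L})/|\mathcal{L}|$ for the average number of incident points per line, Jensen's inequality applied to the convex map $t\mapsto\binom{t}{2}$ gives $\sum_{\ell}\binom{e(\ell)}{2}\ge|\mathcal{L}|\binom{x}{2}$, whence $x(x-1)\le|\mathcal{P}|^2/|\mathcal{L}|$. Solving this quadratic inequality and using $\sqrt{a+b}\le\sqrt{a}+\sqrt{b}$ yields $x\le 1+|\mathcal{P}|/\sqrt{|\mathcal{L}|}$, and multiplying back by $|\mathcal{L}|$ gives the first bound.

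The second estimate $I(\mathcal{P},\mathcal{L})\le|\mathcal{L}|\sqrt{|\mathcal{P}|}+|\mathcal{P}|$ then follows by the dual argument with the roles of points and lines interchanged: for a point $P$ let $d(P)$ be the number of lines of $\mathcal{L}$ through $P$, count a point together with an unordered pair of distinct lines of $\mathcal{L}$ through it, bound this by $\binom{|\mathcal{L}|}{2}$ using that two distinct lines meet in at most one point, and repeat the convexity computation with $y=I(\mathcal{P},\mathcal{L})/|\mathcal{P}|$ in place of $x$. Taking the minimum of the two resulting inequalities finishes the proof. I do not anticipate a genuine obstacle, since this is the classical K\H{o}v\'ari--S\'os--Tur\'an / Cauchy--Schwarz incidence estimate specialized to a $K_{2,2}$-free structure; the only points deserving a little care are keeping straight which bound arises from counting at lines versus counting at points, and the elementary inequality used to pass from the quadratic $x(x-1)\le|\mathcal{P}|^2/|\mathcal{L}|$ to the clean form $x\le 1+|\mathcal{P}|/\sqrt{|\mathcal{L}|}$.
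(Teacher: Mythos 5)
Your proof is correct, and it is precisely the argument the paper points to: the paper does not prove this lemma itself but cites it and remarks that it follows from the bound on the Zarankiewicz number $z(m,n,2,2)$ due to Reiman, which is exactly the K\H{o}v\'ari--S\'os--Tur\'an double count on a $K_{2,2}$-free incidence structure that you carried out (counting collinear point pairs for one bound, concurrent line pairs for the dual, then applying convexity and solving the quadratic). The only step worth making explicit in a write-up is that the passage from $x(x-1)\le|\mathcal{P}|^2/|\mathcal{L}|$ to $x\le 1+|\mathcal{P}|/\sqrt{|\mathcal{L}|}$ is valid for all $x$ (it is trivial when $x\le 1$), so no case distinction is actually needed.
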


Note that this also follows from the upper bound of the Zarankiewicz number $z(m,n,2,2)$ \cite{Reiman}, which denotes then maximum number of $1$s in an $m\times n$ matrix which does not contain an all-$1$ $2\times 2$ submatrix.

\begin{proof}[Proof of Theorem \ref{arbitrary}]
	
	We show the existence of a suitable coloring with color classes of size $10$ or $11$ by the combination of  a probabilistic argument and an application of the point-line incidence bound Lemma \ref{incidence} together with Hall's marriage theorem. 
		
{\it (Step 1.)} Take an arbitrary point $Q$ of the plane and $t=\lceil cq \rceil$ lines incident to $Q$, where  the parameter $c\in (0,1)$ is determined later on. We choose uniformly at random a pair of points from each $\ell \setminus Q$ of these lines $\ell$ incident to $Q$, and we assign a distinct color to each pair.
	
{\it (Step 2.)} Write $tq=9s+r$, where $r\in \{0, 1, \ldots, 8\}$. Next we take a random coloring of the non-colored points of the set $\bigcup_{i=1}^t \ell_i\setminus Q$ so that apart from $r$ color classes of size $10$,  each color is used  $9$  times. We say that a color resolves a line if the line contains at least two points from that color class. And let's call a line resolved if it contains two points from the same color class.

	The probability that a line not incident to $Q$ is not resolved by this random coloring is less than
	
	$$1\cdot \left(1-\frac{8}{tq-1}\right)\cdot \left(1-\frac{2\cdot 8}{tq-2}\right)\cdots \cdot \left(1-\frac{(t-1)\cdot8}{tq-(t-1)}\right)<A_{tq/8}(t-1).$$
	
	Hence we may apply Lemma \ref{anal} to obtain that expected number of not resolved lines which are not incident to $Q$ is at most $$\mathbb{E}(\mbox{not resolved lines not on Q}) < q^2 \cdot A_{tq/8}(t-1)<q^2\cdot \exp\left( - \frac{t^2-1}{2tq/8-t-1}  \right)\Delta(tq/8, t-1),$$
	by the linearity of expectation.
	
	Here the right hand side can be bounded from above as
$$	q^2\cdot \exp\left( - \frac{t^2-1}{tq/4-t-1}  \right)\Delta(tq/8, t-1)< q^2\exp\left( - \frac{4t}{q}  \right)\exp\left(\frac{-t^2-t+q/4}{q/4(tq/4-t-1)}\right)\Delta(tq/8, t-1).$$

If $q>133$ holds and $t=\lceil cq \rceil$ is chosen appropriately, a careful calculation of the Taylor series of the error terms proves that the expected value can be bounded above by the main term  \begin{equation}\label{eq:0}\mathbb{E}(\mbox{not resolved lines not on Q}) <q^2\exp\left( - \frac{4t}{q}  \right)<q^2\exp(- {4c}).\end{equation}

To perform Step 3,	let us take a coloring as above with less than $q^2\exp(- 4c)$ lines not resolved, beside the lines through $Q$. In order to resolve these lines as well, our aim is to assign distinct not-colored points $P_f$ to each of these lines $f$ with $P_f\in f$, and choose a color for each assigned point from the colors used already on $f$. In this step we also require that every color must be used at most once. Finally in Step 4, we have to color the remaining uncolored points in such a way that all the lines through $Q$ are resolved and every color class is of size $10$ or $11$. Here we might apply new colors as well.
	
	
{\it (Step 3.)} First, we have to find a matching between the uncolored $(q-t+1)q+1$ points and the set of not resolved lines which are not incident to $Q$, that covers the set of the lines in view. To resolve at the end all the remaining lines as well (i.e., those that passes through $Q$), we extend this incidence graph by adding two copies of not resolved lines through $Q$ and joining them to the points incident to them. We apply  Hall's theorem twice combined with Lemma \ref{incidence} to prove the existence of the covering matching in view. The incident points chosen in this step to the lines are called the assigned points.

 Suppose that we have a set of not resolved lines $X$, and   a set $Y$ of uncolored points  of cardinality less than $|X|$ incident to them.  Lemma \ref{incidence} implies that $I(X,Y)< |X|\sqrt{|X|-1}+~|X|$ on the one hand, and we also know that $I(X,Y)\geq(q-t+1)|X|$. This is in turn a contradiction if
\begin{equation}\label{eq:1}
\sqrt{|X|-1}<(q-t), \mbox{ \ \ thus \ if \ } q^2\cdot\exp(-4c)+2(1-c)q < q^2(1-c)^2-2(1-c)q,
\end{equation}
where we took into account that $X$ is of size at most $q^2\exp(-4c)+2(1-c)q$, and the error term which may occur while we are considering the ceiling in  $t=\lceil cq \rceil$.

In other words, Condition \eqref{eq:1} yields a suitable  assignment of distinct not colored points  for the not resolved lines. To assign distinct colors for these points from their respective lines skew to $Q$, we apply Hall's theorem again and suppose to the contrary that there is a set $X$ of not resolved lines skew to $Q$ on which  less than $|X|$ colors were used. Hence the total number of colored points on these lines is at most $10|X|$ as each color can appear at most $10$ times. However, the number of incidences between the colored points of these lines could not exceed $|X|\sqrt{10|X|}+ 10|X|$ according to Lemma \ref{incidence}, while this incidence number is $t|X|=~ \lceil cq \rceil|X|$. Thus the got a contradiction to our assumption if
\begin{equation}\label{eq:2}
 |X|\sqrt{10|X|}+ 10|X|< \lceil cq \rceil|X| \mbox{ \ \ i.e., \ if \ } q^2\cdot\exp(-4c)< \frac{( cq -10)^2}{10} .
\end{equation}

If both Condition \eqref{eq:1} and \eqref{eq:2} hold then we are able to resolve all the lines skew to $Q$.
In order to choose the optimal constant $c$, we may suppose that these upper bounds are close to each other (asymptotically) i.e. we choose $c$ such that the values of $q^2(1-c)^2$ and $\frac{c^2q^2}{10}$ are almost the same.

If we pick that constant $c$ to be $c=0.77$, it is easy to verify that both Condition \eqref{eq:1} and \eqref{eq:2} hold when $q>133$.

{\it (Step 4.)} We finish are proof by coloring the remaining uncolored points such that all the lines containing $Q$ are resolved and each color is used $10$ or $11$ times. To guarantee the resolving property, we introduce new colors and take $5$ pair of assigned points from each distinct $5$ lines of the $q-t+1$ not yet resolved ones. This makes further color classes of size $10$, with less than $10$ assigned points left uncolored. These leftout assigned points finally get yet a new color, and this color class is completed to have size $10$ by putting in arbitrary uncolored points.\\
Up to this point, we already resolved all the lines but we have color classes of size $9$, $10$ and possibly $11$ as well. To end up with a balanced coloring we try to complete the classes of size $9$ to have size $10$ by coloring the remaining uncolored points. This is doable since the number of uncolored points is at least $(q-t+1)q+1- q^2\cdot\exp(-4c)$ which is more than the number $N<\frac{1}{9}tq$ of  color classes of size $9$ at this point.

Thus we obtained a balanced coloring of almost all the points which resolves every line.
Finally, we partition the remaining uncolored points to color classes of size $10$ and put the remaining at most $9$ points into distinct formerly created color classes of size $10$.
This provides at most $17$ classes of size $11$ and further classes of size $10$, which completes the proof.
\end{proof}

\bigskip

\begin{remark}\label{probkisq}
For $11\le q \le 133$, we are able to verify by a computer aided search that there exists a balanced coloring for an arbitrary projective plane of order $q$ with color classes of size 11 and 12, namely the number of colors needed is at most $ \frac{q^2 +q- 18}{11}$. One should repeat the steps in the proof of Theorem \ref{arbitrary} but use the concrete expected value instead of Condition (1) and use the stronger inequalities in Condition (2) and (3). Finally, suppose that the number of color classes is at most $ \frac{q^2 +q- 18}{11} $  and $q\leq 10$. Then we have less than $11$ colors, thus every line has a pair of monochromatic points by the pigeon-hole principle.
\end{remark}

\section*{Concluding remarks}
We showed that for certain non-desarguesian planes one can construct a rainbow-free coloring with color classes of size 3 and 4. It would definitely be interesting to find more classes of projective planes with this property. Some nice construction for potential planes can be found in the paper by Pott \cite{Pott}.

Probably, the most natural extension to our problem is to consider higher dimensional projective spaces. In this case, we can consider subspaces of fixed dimension $k$ in PG$(n,q)$, and try to determine the
balanced upper chromatic number. Some initial results in this direction can be found in Araujo-Pardo, Kiss, Montejano \cite{KissGy}.

One can also extend the problem to the case when more color class sizes are allowed. For example, one can consider rainbow-free coloring with color classes of size at most $k$, and determine the maximum number of
colors under this condition.

\section*{Acknowledgement}
\protect\includegraphics[height=1cm]{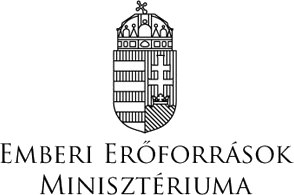} The first author was supported by the \'UNKP-18-3 New National Excellence Program of the Ministry of Human Capacities. In the first part of this research, the first, third and fifth authors gratefully acknowledge the support of the bilateral Slovenian--Hungarian Joint Research Project no.\ NN 114614 (in Hungary) and N1-0140 (in Slovenia). In the second part of this research, these three authors were supported by the Slovenian--Hungarian Bilateral project Graph colouring and finite geometry (NKM-95/2019000206) of the two Academies. The third author was supported by research project N1-0140 of the Slovenian Research Agency. The fourth author is also supported by the Hungarian Research Grants (NKFI) No. K 120154 and SNN 132625 and by the J\'anos Bolyai Scholarship of the Hungarian Academy of Sciences. The fifth author was also supported by research project J1-9110 of ARRS.

\end{document}